\newtheorem{theorem}{Theorem}
\newtheorem{lemma}[theorem]{Lemma}
\newtheorem{corollary}[theorem]{Corollary}
\newtheorem{conjecture}[theorem]{Conjecture}
\begin{document}

\title{Meyniel's conjecture on graphs of bounded degree}

\author{
Seyyed Aliasghar Hosseini
\and Bojan Mohar
\and Sebastian Gonzalez Hermosillo de la Maza
}

\date{\today}

\maketitle

\begin{abstract}
The game of Cops and Robbers is a well known pursuit-evasion game played on graphs. It has been proved \cite{bounded_degree} that cubic graphs can have arbitrarily large cop number $c(G)$, but the known constructions show only that the set $\{c(G) \mid G \text{ cubic}\}$ is unbounded. In this paper we prove that there are arbitrarily large subcubic graphs $G$ whose cop number is at least $n^{1/2-o(1)}$ where $n=|V(G)|$. We also show that proving Meyniel's conjecture for graphs of bounded degree implies a weaker version of Meyniel's conjecture for all graphs.
\end{abstract}

\section{Introduction}
Cops and Robbers is a Pursuit-evasion game played on graphs with two players, one controls the cops and the other one controls the robber. The game begins by the cops selecting some vertices as their initial positions. Then the robber, knowing the positions of cops, selects his initial vertex. From now on, first the cops move and then the robber moves, where moving means going to a neighboring vertex or staying at the same position.
The goal of the cops is to capture the robber, which means having a cop at the same vertex as the robber, and the goal of the robber is to prevent this from happening.
The minimum number of cops that guarantee the robber's capture in a graph $G$ is called the \emph{cop number} of $G$ and is denoted by $c(G)$.

One of the most important open problems in this area is \emph{Meyniel's Conjecture}  (communicated
by Frankl \cite{frankl}).

\begin{conjecture}
For a connected graph $G$, if $n=|V(G)|$, then $c(G)=O(\sqrt{n}\,)$.
\end{conjecture}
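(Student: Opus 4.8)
Since the statement to be proved here is Meyniel's conjecture itself --- the central open problem of the area --- a ``proof proposal'' can only be a description of the strongest available line of attack together with an honest account of where it stalls. The plan is to build a cop strategy around the classical \emph{isometric-path guarding} lemma of Aigner and Fromme: one cop can, in a bounded number of moves, reach a position from which she permanently guards a chosen shortest path $P$, meaning that the robber is caught the instant he enters $N[V(P)]$. Fix a root $v$ and consider the breadth-first layers $L_0, L_1, L_2, \dots$ around $v$; at every moment the robber is confined to a single component $R$ of $G$ minus the closed neighborhoods of the cops already in play, and we write $m = |R|$. The engine of the argument is one ``phase'': introduce a fresh batch of about $\sqrt m$ cops, each guarding a well-chosen geodesic of $G$, so that afterwards the robber's component has at most $m/2$ vertices. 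Iterating the phase $O(\log n)$ times corners the robber; a more careful amortization --- retiring each cop as soon as her guarded geodesic stops bordering $R$ and recycling her into the next phase --- should then remove the logarithmic factor and leave only $O(\sqrt n)$ cops active at any time.

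With this reduction, everything comes down to a purely structural claim: \emph{every connected graph on $m$ vertices admits $O(\sqrt m)$ shortest paths whose closed neighborhoods together form a balanced separator} (or at least one that isolates the robber's current vertex into a component of size $\le m/2$). For classes that already have sublinear balanced vertex separators --- planar, bounded genus, any proper minor-closed family, and, one would hope, the bounded-degree graphs that appear later in this paper --- such a claim is easy: a separator $S$ of size $O(\sqrt m)$ can simply be held by one motionless cop per vertex, the robber is confined to one component of $G - S$, and one recurses on the induced subgraph there, which gives $c(n) = O(\sqrt n)$ outright. For general graphs one must instead gamble that \emph{guarded geodesic neighborhoods} are genuinely more powerful than plain vertex cuts. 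The examples that any proof has to survive are the incidence graphs of projective planes: $(q+1)$-regular, bipartite, girth $6$, with $n = 2(q^2+q+1)$ vertices, so that (since girth at least $5$ forces $c(G) \ge \delta(G)$ while domination forces $c(G) \le \gamma(G)$) their cop number is $\Theta(\sqrt n)$. A correct argument must be tight against these and not waste even a constant factor against the minimum degree.

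The genuine obstacle is exactly the geodesic-separator claim for \emph{arbitrary} graphs, and it is precisely the point at which the current record upper bound $n \cdot 2^{-(1+o(1))\sqrt{\log_2 n}}$ (Scott--Sudakov; Lu--Peng; Frieze--Krivelevich--Loh) gets stuck. General graphs --- expanders in particular --- have no sublinear balanced vertex separators at all, so deleting an arbitrary small set cannot cut $R$; one is forced to exploit that $R$ is the \emph{survivor} of deleting cop neighborhoods rather than an arbitrary set, and nobody knows how to turn this into constant-factor shrinkage. Morally the trouble is that a ball of radius $r$ around the robber can carry $2^{\Theta(r)}$ vertices while a single geodesic passing near him removes only $O(r)$ of them, so $\sqrt m$ guarded geodesics come nowhere near halving $R$ when $R$ behaves like an expander; quantitatively this is the source of the extra factor $2^{\Theta(\sqrt{\log n})}$ between the known bound and $\sqrt n$. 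I would attack it by trying to use the BFS layering and the already-guarded geodesics \emph{jointly}, so that the robber cannot simultaneously sit deep inside many exponentially growing balls --- if that coupling can be made quantitative one obtains the conjecture, and if it cannot, the fallback is to settle for the unconditional bounded-degree lower bound and the reduction --- Meyniel for bounded degree implies a weak Meyniel for all graphs --- that the rest of this paper actually establishes.
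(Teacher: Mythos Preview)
The paper contains no proof of this statement: it is stated as Meyniel's Conjecture, the central open problem that motivates the entire paper, and is never claimed to be proved there. So there is nothing to compare your proposal against.

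You correctly recognise this and, rather than fabricating an argument, you give an honest survey of the isometric-path/geodesic-separator approach and explain precisely where it breaks down (the lack of a constant-factor shrinkage lemma for arbitrary graphs, expanders being the obstruction, and the resulting $n\cdot 2^{-(1+o(1))\sqrt{\log n}}$ barrier). That is the appropriate response here. Your final sentence even anticipates the actual content of the paper: the reduction showing that Meyniel for bounded-degree graphs would imply a weak form of Meyniel in general, and the construction of subcubic graphs with cop number $n^{1/2-o(1)}$.

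If anything is worth flagging, it is only that your write-up reads as commentary rather than a proof; since the statement is genuinely open, that is unavoidable, but in a submitted text you would want to label it explicitly as a discussion of the state of the art rather than as a ``proof proposal''.
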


This conjecture has received a lot of attention but it is still far away from being proved. In fact the following conjecture, called \emph{Weak Meyniel's Conjecture}\footnote{Also known as Soft Meyniel's Conjecture.}, is still widely open.

\begin{conjecture}\label{conj:weak Meyniel}
There is an $\varepsilon>0$ such that every graph $G$ of order $n$ has $c(G)=O(n^{1-\varepsilon})$.
\end{conjecture}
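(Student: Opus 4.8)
Of course Conjecture~\ref{conj:weak Meyniel} is open; what I would actually prove is the conditional statement announced in the abstract, and it is cleanest to assume Meyniel's conjecture only for subcubic graphs: \emph{suppose there is a constant $C$ with $c(H)\le C\sqrt{|V(H)|}$ for every connected graph $H$ of maximum degree at most $3$; then every connected graph $G$ on $n$ vertices satisfies $c(G)=O(n^{2/3+o(1)})$}, so Conjecture~\ref{conj:weak Meyniel} holds with any $\varepsilon<1/3$.

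The plan is to fix a parameter $d=d(n)$ and combine two complementary tools, one for the ``dense'' and one for the ``sparse'' part of $G$. The dense tool is the trivial bound $c(G)\le\gamma(G)$ together with the standard fact that a graph of minimum degree at least $d$ has a dominating set of size $O\!\big(\tfrac{n}{d}\log n\big)$. The sparse tool is the hypothesis, applied after reducing to maximum degree $3$. Concretely: let $K$ be the $d$-core of $G$ (iteratively delete vertices whose current degree is less than $d$), so $G[K]$ has minimum degree $\ge d$; pick a dominating set $D\subseteq K$ of $G[K]$ of size $O\!\big(\tfrac{n}{d}\log n\big)$ and park that many cops on $D$ for the whole game. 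Since $K\subseteq N_G[D]$, a robber entering $N_G[D]$ is caught next move, so the robber is forced to stay inside $G':=G-N_G[D]$; this $G'$ is an induced subgraph of $G-K$, hence $(d-1)$-degenerate, so $|E(G')|<dn$. The remaining cops then only need to catch a robber confined to the sparse graph $G'$, which takes $c(G')$ of them.

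To bound $c(G')$ I would invoke the structural lemma that I expect to be the technical heart of the matter: \emph{for every graph $F$ there is a graph $\widehat F$ with $\Delta(\widehat F)\le 3$ and $|V(\widehat F)|=O(|E(F)|+|V(F)|)$ such that $c(F)\le c(\widehat F)$.} Here $\widehat F$ is built in the expected way --- subdivide every edge of $F$ a few times and replace each high-degree vertex by a small subcubic routing gadget (a binary tree, say, or a cycle) whose leaves inherit the incident edges --- and the inequality is proved by a shadow argument: run a winning cop strategy on $\widehat F$, project each cop to $F$ along the natural map, and let the robber on $\widehat F$ chase whatever the (adversarial) robber on $F$ does. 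Applied to each component of $G'$, the lemma yields subcubic graphs on $O(|E(G')|)=O(dn)$ vertices, so the hypothesis gives $c(G')=O(\sqrt{dn})$. Altogether $c(G)=O\!\big(\tfrac{n}{d}\log n+\sqrt{dn}\big)$, and choosing $d=n^{1/3}$ (up to logarithmic factors) balances the terms at $n^{2/3+o(1)}$.

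The crux --- and the step I expect to cause trouble --- is getting the structural lemma in the direction $c(F)\le c(\widehat F)$. The naive shadow simulation has a real defect: walking across a routing gadget or a subdivided edge costs many moves in $\widehat F$ while the shadowed $F$-robber moves only once, so the cops it produces for $F$ are effectively \emph{faster} than the robber; since faster cops can only make the game easier, this argument naively proves $c(\widehat F)\ge c_{\mathrm{fast}}(F)$, which is weaker than $c(\widehat F)\ge c(F)$. The resolution I would aim for is to keep the gadgets and subdivision lengths short enough (diameters $O(\log n)$) that the cops' speed-up is at most a polylogarithmic factor, and to pay for that factor with extra cops --- harmless here, since $\sqrt{dn}\cdot\mathrm{polylog}(n)$ is still $n^{2/3+o(1)}$ --- while arguing that the shadow strategy genuinely forces the $\widehat F$-robber to imitate an arbitrary $F$-robber. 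Once this lemma is settled, the rest of the proof --- the domination bound, the $d$-core peeling, and the optimisation over $d$ --- is routine.
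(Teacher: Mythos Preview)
The statement is an open conjecture, so there is nothing to prove directly; both you and the paper pivot to the conditional implication ``Meyniel for subcubic graphs $\Rightarrow$ Weak Meyniel for all graphs''. The paper obtains exponent $3/4$; you claim $2/3$. The entire improvement rides on your structural lemma --- a subcubic $\widehat F$ with $|V(\widehat F)|=O(|E(F)|+|V(F)|)$ and $c(\widehat F)\ge c(F)$ --- and that lemma is exactly where your argument breaks.

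The shadow argument that both you and the paper invoke needs one property: every ``port-to-port'' distance inside a replacement gadget is the \emph{same} constant, so that one robber move in $F$ translates to a fixed number $r$ of moves in $\widehat F$, while in $r$ moves a cop's shadow in $F$ advances at most one step. The paper's gadget $A_d(m)$ is engineered precisely for this: any two $x$-vertices are at distance exactly~$2$. But a diameter-$2$ gadget on $d$ ports with $O(d)$ vertices must have maximum degree $\Omega(\sqrt d)$, which is why the paper can only reduce $\Delta$ to $O(\sqrt\Delta)$ per round and ends up with $O(\Delta^2 n\,\mathrm{polylog})$ vertices after iterating --- the $\Delta^2$ is not slack. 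Your binary-tree gadget, by contrast, has wildly non-uniform leaf-to-leaf distances (siblings at distance~$2$, opposite leaves at distance $\sim 2\log d$), so a cop sitting in a small gadget, or taking a short path, moves its $F$-shadow several times while the robber crosses one large gadget. The escape-strategy lift therefore fails, as you yourself note. (The paper \emph{does} use binary trees in the digraph section, but only because orientation forces every in-port to out-port path through the root, making all such distances equal; that trick has no undirected analogue.)

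Your proposed repair --- absorb a polylog speed-up by spending polylog-many extra cops --- is not a known theorem, and you give no argument for it; in general there is no established way to trade cop speed for cop count. Finally, note that your $d$-core step produces a $(d-1)$-\emph{degenerate} remainder, not one of bounded maximum degree, so the paper's actual reduction (which needs $\Delta$ bounded) does not plug in directly; if you first do the paper's greedy high-degree removal to force $\Delta<d$, your optimisation collapses back to the paper's exponent $3/4$. In short: the dense/sparse split is fine and mildly different from the paper's, but the claimed $O(|E|)$-size subcubic replacement is a genuine gap, and without it you recover only what the paper already proves.
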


Suppose that Conjecture \ref{conj:weak Meyniel} holds for every $\varepsilon$ such that $1-\varepsilon > \delta$. Then we say that the \emph{Weak Meyniel's Conjecture holds with exponent $\delta$}.

\section{Degree reduction}

In this section, we show that from every graph $G$ we can construct a graph $\widehat G$ with the following properties:
\begin{itemize}
\item[\rm (a)]
$c(\widehat G) \ge c(G)$,
\item[\rm (b)]
$\widehat G$ has smaller maximum degree than $G$, and
\item[\rm (c)]
$\widehat G$ is not much larger than $G$.
\end{itemize}
In the construction, we will replace each vertex of $G$, whose degree is $d$, with a copy of the following graph $A_d = A_d(m)$, where $2\le m\le d$. To form the graph $A_d$, we start with a set $X = \{x_1, \ldots, x_d\}$ of $d$ mutually nonadjacent vertices that are partitioned into $m$ almost equal parts $X_1,\dots, X_m$ (so that $\lfloor d/m\rfloor \le |X_i| \le \lceil d/m\rceil$ for $1\le i \le m$). Take $\binom{m}{2}$ additional vertices $y_{ij}=y_{ji}$ for $1\le i < j \le m$ and join each $y_{ij}$ to all vertices in $X_i\cup X_j$. The resulting bipartite graph $A_d$ has $d+\binom{m}{2}$ vertices, each $y_{ij}$ has degree between $2d/m$, and each $x_t$ has degree $m-1$. See Figure \ref{fig:A10_4} for an example.

\begin{figure}[H]
    \centering
    \includegraphics[width=0.5\textwidth]{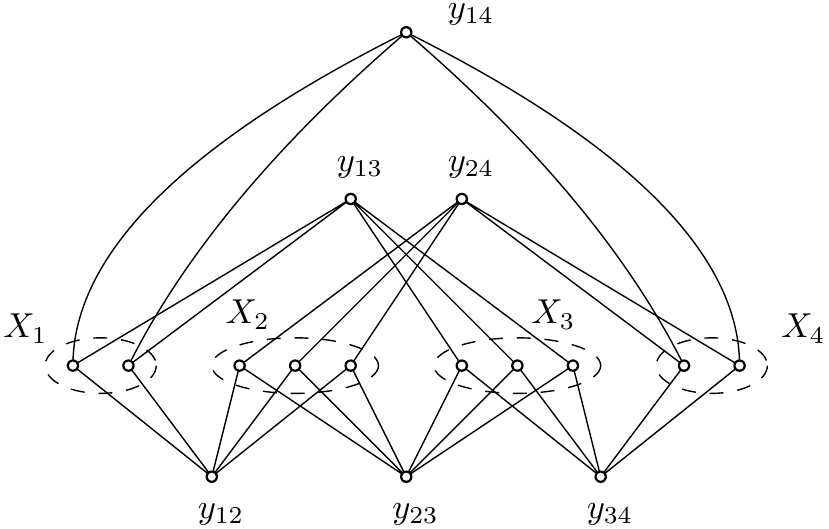}
    \caption{The replacement graph $A_{10}(4)$ for degree 10 with 4 parts.}
    \label{fig:A10_4}
\end{figure}

In the following theorem we replace each vertex of a graph with a subgraph of the form $A_d(m)$, where $d$ is the degree of the vertex, and give a construction of a graph $\widehat G$ with desired properties (a)--(c). Recall that $\Delta(G)$ denotes the maximum vertex degree in $G$.

\begin{theorem}\label{thm:decrease degree}
For every graph $G$, there is a graph $\widehat G$ with the following properties:
\begin{itemize}
\item[\rm (a)]
$c(\widehat G) \ge c(G)$,
\item[\rm (b)]
$\Delta(\widehat G) \le \left\{
                          \begin{array}{ll}
                            3, & \hbox{if $\Delta(G)\le 4$;} \\[1mm]
                            2\Bigl\lceil \sqrt{\Delta(G)/2}\,\Bigr\rceil, & \hbox{otherwise.}
                          \end{array}
                        \right.$
\item[\rm (c)]
$|V(\widehat G)| \le \tfrac{11}{5}\Delta(G)|V(G)|$.
\end{itemize}
\end{theorem}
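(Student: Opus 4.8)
\medskip
\noindent\textbf{Proof plan.}\quad
I would take $\widehat G$ to be exactly the substitution prepared before the statement. For each vertex $v$ of $G$ with $d_v:=\deg_G(v)\ge 2$ put $m_v:=\bigl\lceil\sqrt{2d_v}\,\bigr\rceil$; one checks $2\le m_v\le d_v$, so $A_{d_v}(m_v)$ is defined. Replace $v$ by a fresh copy of $A_{d_v}(m_v)$, fix a bijection between the $d_v$ vertices of its set $X$ and the $d_v$ edges of $G$ at $v$, and for every edge $vw\in E(G)$ add the single edge joining the $x$-vertex of the copy at $v$ assigned to $vw$ to the $x$-vertex of the copy at $w$ assigned to $vw$. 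Vertices of degree at most $1$ are left unchanged, and no vertex of $\widehat G$ lies outside the copies.

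\smallskip\noindent\emph{Properties (b) and (c)} are bookkeeping. In $\widehat G$ a vertex $y_{ij}$ of the copy at $v$ has degree $|X_i|+|X_j|\le 2\lceil d_v/m_v\rceil$ and an $x$-vertex has degree $(m_v-1)+1=m_v$. Since $m_v\ge\sqrt{2d_v}$ we get $d_v/m_v\le\sqrt{d_v/2}\le\sqrt{\Delta/2}$, and since $\lceil 2x\rceil\le 2\lceil x\rceil$ and $\sqrt{2\Delta}=2\sqrt{\Delta/2}$ we get $m_v\le\lceil\sqrt{2\Delta}\,\rceil\le 2\lceil\sqrt{\Delta/2}\,\rceil$, so every degree in $\widehat G$ is at most $2\lceil\sqrt{\Delta/2}\,\rceil$. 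When $\Delta(G)\le 4$ only $d_v\in\{2,3,4\}$ occur (besides untouched degree-$\le 1$ vertices), and in $A_2(2)$, $A_3(3)$, $A_4(3)$ the part sizes are $1,1$; $1,1,1$; $2,1,1$, so all $y$-degrees are $\le 3$ and all $x$-degrees are $m_v\le 3$, giving $\Delta(\widehat G)\le 3$. For (c), $|V(A_{d_v}(m_v))|=d_v+\binom{m_v}{2}$, and an elementary case check gives $d_v+\binom{\lceil\sqrt{2d_v}\,\rceil}{2}\le\tfrac{11}{5}d_v$ for every $d_v\ge 2$ (with equality at $d_v=5$, where $5+\binom 4 2=11$); summing over $v$ then yields $|V(\widehat G)|\le\tfrac{11}{5}\sum_v d_v=\tfrac{22}{5}|E(G)|\le\tfrac{11}{5}\Delta(G)\,|V(G)|$.

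\smallskip\noindent\emph{Property (a)} is the crux. It suffices to prove that if the robber wins against $t$ cops on $G$ then he wins against $t$ cops on $\widehat G$ (apply with $t=c(G)-1$). Fix a winning robber strategy $\tau$ on $G$ against $t$ cops and lift it. The features of a copy $A=A_{d_v}(m_v)$ I would exploit are that any two of its $x$-vertices lie at distance exactly $2$, hence (using the single-edge links between copies) that inside a copy reaching an $x$-vertex other than the one a cop has just entered through costs at least two moves, and reaching a copy two or more $G$-steps away costs at least two boundary crossings, each preceded by a traversal of a whole intermediate copy. Assign to each cop in $\widehat G$ its \emph{shadow}, the vertex $u$ whose copy contains it, dilate time by a small constant factor (one ``block'' of $\widehat G$-rounds per $G$-round), and record the shadow-tuple once per block, a little after the block begins. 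First, over the moves between two consecutive recordings each shadow moves along at most one edge of $G$ — a cop changes shadow only by crossing a link, after which it sits on an $x$-vertex of the new copy and a further change costs it at least three more moves — so the recorded shadow-tuples form a legal play of $t$ cops in $G$; the robber runs $\tau$ against it, which keeps his $G$-vertex at distance $\ge 2$ from every recorded shadow. Second, a $G$-move $v\to v'$ is realised in $\widehat G$ by walking inside the current copy $A_{d_v}$ to the $x$-vertex assigned to $vv'$ (at most $2$ steps) and crossing into $A_{d_{v'}}$ (one step), which fits inside a block precisely because the recording is placed early enough that $\tau$ has produced $v'$ before the robber must start moving. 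Third, one shows the robber is never caught in $\widehat G$: the distance-$\ge 2$ condition on recorded shadows forces every cop to be at $\widehat G$-distance at least $4$ from the robber's copy at each recording, and a cop needs more moves than a block contains to enter the copy the robber currently occupies; and at the single step per block when the robber leaves his copy, any cop that could be near the link he uses would have had, at the recording, a shadow adjacent to $v'$, which the invariant forbids.

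\smallskip\noindent The step I expect to be the genuine obstacle is this third item, carried out as an induction: one must rule out that the cops steer their $\widehat G$-positions so that a cop slips into the robber's current copy, or onto the link he is about to cross, between two recordings. The reason it cannot happen is that such a cop would have had to occupy, one block earlier, a vertex from which it could not have arrived without violating an earlier instance of the distance invariant, so the exclusion propagates backward through the induction; pinning down the dilation factor and the position of the recording inside the block so that this backward propagation really closes — and handling the opening separately, i.e.\ turning the cops' initial placement in $\widehat G$ into a legal opening for the shadow cops in $G$ before the $G$-robber chooses his start — is where all the care goes. The two gadget facts quoted above ($x$-vertices pairwise at distance $2$, and separating two copies costing at least two crossings) are exactly what make the interleaving close.
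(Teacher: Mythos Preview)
Your construction and the bookkeeping for (b) and (c) match the paper's exactly (including the equality case $d=5$), and your approach to (a) --- the shadow strategy with the robber imitating his $G$-escape against the shadow cops --- is the same as the paper's. The paper's proof of (a) is considerably shorter than your plan: it simply fixes the dilation factor at~$3$, has the robber translate each $G$-move into three $\widehat G$-moves (two inside the current gadget to reach the correct $x$-vertex, one to cross the link), and asserts directly that since the robber's shadow will not be captured on the cops' next $G$-move, the robber is not captured during the corresponding three $\widehat G$-rounds. Your own observation that the $G$-winning strategy keeps the robber at $G$-distance $\ge 2$ from every shadow (hence $\widehat G$-distance $\ge 4$ from every cop at block boundaries) is exactly what makes this close without any ``backward propagation'' argument; the elaborate block/recording machinery and the open dilation factor are unnecessary, and leaving them unresolved is the only real gap in your write-up.
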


\begin{figure}[H]
    \centering
    \includegraphics[width=0.75\textwidth]{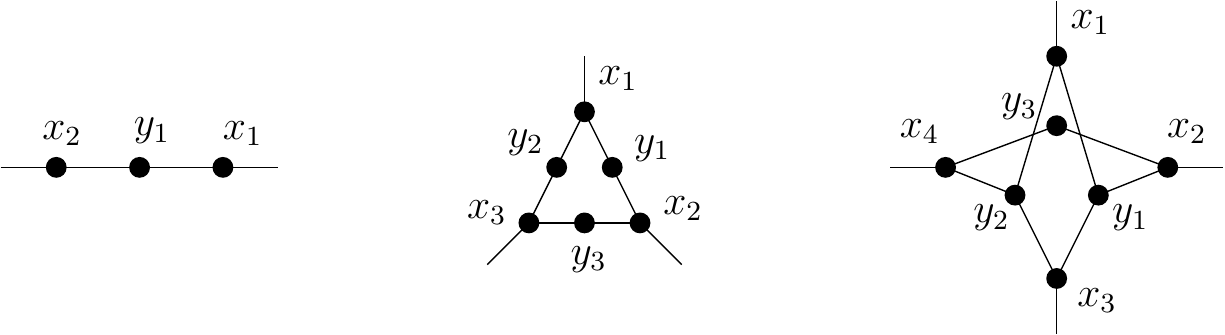}
    \caption{The replacement graphs $A_d$ for $d=2,3,4$ with added semi-edges incident with vertices $x_1,\dots,x_d$.}
    \label{fig:A432}
\end{figure}

\begin{proof}
Let $v$ be a vertex of degree $d$ in $G$ and let $w_1, \ldots, w_d$ be its neighbors. Replace $v$ by a copy of the gadget $A_d$ described above and join $x_i$ to $w_i$ for $i=1,\ldots, d$. Apply this change to all vertices of $G$ to get $\widehat{G}$. After doing this, each edge $uv$ of $G$ has been replaced by an edge joining two $x$-vertices in the corresponding gadgets $A_d$ and $A_{d'}$ corresponding to $u$ and $v$. 

\medskip
(a) The idea that we will use in this part is similar to showing that subdividing all edges of a graph the same number of times does not decrease the cop number.
Let $t = c(G)-1$. Then $t$ cops cannot capture the robber in $G$. Consider the corresponding escape strategy. We will use it to show that the robber can escape from $t$ cops in $\widehat{G}$, thus confirming that $c(\widehat{G})\ge c(G)$.

At the beginning of the game each cop will choose a vertex (in $\widehat{G}$) as their initial position. Each vertex is in a gadget $A_d$, which corresponds to a vertex $u$ in $G$. The vertex $u$ will be called the \emph{shadow} of the vertices in this copy of $A_d$.  The robber will assume that each cop is in the corresponding shadow vertex in $G$ and will play the escape strategy in $G$. Initially, he has a vertex $v$ to pick in $G$, and he will pick the vertex $x_1\in V(A_k)$ in $\widehat{G}$. From now on, the robber will move based on the movements of (the shadows of) cops in $G$. Whenever the shadows of cops have moved in $G$, the robber has an escaping move in $G$ which can be translated into a series of at most 3 moves in $\widehat{G}$. Since the shadow of the robber will not be captured in the next move of cops in $G$, the robber will not get captured in $\widehat{G}$ in the next 3 moves. During these three moves of the robber, the cops will also make 3 moves, but the shadow of each cop in $G$ will either stay the same or move to an adjacent vertex in $G$. Thus, the robber can interpret the change of the shadows as the moves of the cops in $G$. It will be his turn to move, so he can continue using the escaping strategy for $G$. Therefore the robber in $\widehat{G}$ can copy the strategy of the robber in $G$ and escape from $t$ cops in $\widehat{G}$.

\medskip
(b)
The statement is clear when $\Delta \leq 4$, see Figure \ref{fig:A432}. In order to prove the stated bound for $\Delta \geq 5$, we take $m = m(d) = \bigl\lceil \sqrt{2d}\,\bigr\rceil$. For this choice of $m$, the degrees of vertices $x_i$ and $y_{ij}$ in $A_d(m)$ are all approximately equal to each other. The degree of each $x_i$ is equal to $m$ and the degree of any $y_{ij}$ is at most
$$
   2\biggl\lceil \frac{d}{\bigl\lceil\sqrt{2d}\,\bigr\rceil} \,\biggr\rceil \leq
   2\biggl\lceil \frac{d}{\sqrt{2d}} \,\biggr\rceil =
   2\Bigl\lceil \sqrt{d/2} \,\Bigr\rceil.
$$

\medskip
(c)
Let $\Delta=\Delta(G)$. First note that if $\Delta\leq 3$, then $G$ will have the desired properties itself. So we may assume that $\Delta\geq 4$. If $\Delta=4$, then we are replacing each vertex with at most 7 vertices, thus $|\widehat{G}| \le 7|G| \le \tfrac{11}{5}\Delta|G|$, as desired.

For vertices of degree $d\geq 5$, we consider $m$ to be $\bigl\lceil \sqrt{2d}\,\bigr\rceil$ as in part (b). Then $A_d(m)$ has $d+\binom{m}{2}$ vertices. Using $\bigl\lceil \sqrt{2d}\,\bigr\rceil\leq \sqrt{2d} +1$, it is easy to see that $d+\binom{m}{2}\le \tfrac{11}{5} d$ for $d\ge 13$, and a direct computation shows that the same bound holds for $d=5,\dots,13$. Thus, $|\widehat G| \le \tfrac{11}{5}\Delta|G|$. Note that for $d=5$ we get the equality.
\end{proof}

Andreae \cite{bounded_degree} was the first one to prove that cubic graphs can have arbitrarily large cop number. Explicit examples are cubic graphs of large girth which were shown to have large cop number by Frankl \cite{frankl}. 
By repeatedly using Theorem \ref{thm:decrease degree}, we obtain a new proof of this fact.


These results are also applicable to digraphs, see Section \ref{sec:5.3}.

\section{Reducing the degree further}

Using Theorem \ref{thm:decrease degree} repeatedly, we can decrease the degree of each vertex down to 3.
To analyse the number of steps needed, let us assume that $G$ is a graph on $n$ vertices with $c(G)=c$ and $\Delta(G)=d$. We also let $\mu=\tfrac{11}{5}$. Then we have:

\begin{center}
\begin{tabu} to 0.8\textwidth { | X[c] | X[c] | X[c] | X[c]| }
 \hline
 Graph & $\#$ vertices & $\Delta$ & cop number \\
 \hline
 $G$ & $n$ & $d$ & $c$\\
 $\widehat{G}$ & $\leq \mu d n$ & $\bigl\lceil \sqrt{2d}\,\bigr\rceil$ & $\geq c$ \\
 \hline
\end{tabu}
\end{center}
By repeating the strategy to get $\widehat{G}'$ from $\widehat{G}$ we will have:
\begin{center}
\begin{tabu} to 0.8\textwidth { | X[c] | X[c] | X[c] | X[c]| }
 \hline
 $\widehat{G}'$ & $\leq \mu^2 d\bigl\lceil \sqrt{2d}\,\bigr\rceil n$ & $\biggl\lceil \sqrt{2\bigl\lceil \sqrt{2d}\,\bigr\rceil}\,\biggr\rceil$ & $\geq c$ \\
 \hline
\end{tabu}
\end{center}

If $2^{2^{k-2}+1}< d \leq 2^{2^{k-1}+1}$, then for $\widehat{d}$, the maximum degree in $\widehat{G}$, we have $2^{2^{k-3}+1}< \widehat{d} \leq 2^{2^{k-2}+1}$. Therefore, in $k$ steps the maximum degree of the graph will become 3. 
In other words, by repeating the argument $k\leq \log \log \frac{d}{2} +2$ times \footnote{All logarithms in this paper are taken base 2.}, the degrees of all vertices will be at most 3 and the number of vertices of the graph will be at most
$$\mu^k \cdot 2^{1/2+1/4+\cdots +1/2^{k-1}} \cdot d^{1+1/2+1/4+\cdots+1/2^k} n\leq 2\mu^k\cdot d^2 \cdot n\leq O(d^2n\log^{1.14}{d}).$$
This yields the following corollary.

\begin{corollary}\label{cor:degree}
Let\/ $G$ be a graph on $n$ vertices with maximum degree $d$, then there exist a subcubic graph $H$ on $O(d^2n\log^{1.14}{d})$ vertices such that $c(H)\geq c(G)$.
\end{corollary}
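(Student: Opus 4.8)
The plan is to apply Theorem~\ref{thm:decrease degree} repeatedly. Set $G_0 = G$ and $G_{i+1} = \widehat{G_i}$ for $i \ge 0$. Part~(a) of the theorem gives $c(G_{i+1}) \ge c(G_i)$, so by transitivity $c(G_i) \ge c(G)$ for all $i$; hence it suffices to iterate until the maximum degree reaches $3$ and then estimate the order of the resulting graph, which we take as $H$. (If $d \le 3$ already, take $H = G$.)

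First I would track the maximum degree. By part~(b), $\Delta(G_{i+1}) \le \bigl\lceil \sqrt{2\,\Delta(G_i)}\,\bigr\rceil$ whenever $\Delta(G_i) \ge 5$, and $\Delta(G_{i+1}) \le 3$ as soon as $\Delta(G_i) \le 4$. The cleanest way to count iterations is to place $d$ on the doubly-exponential scale: if $2^{2^{k-2}+1} < d \le 2^{2^{k-1}+1}$, then $\bigl\lceil \sqrt{2d}\,\bigr\rceil \le 2^{2^{k-2}+1}$, so the tower exponent halves at each step and after $k \le \log\log\tfrac{d}{2} + 2$ iterations the maximum degree is at most $3$. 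The small values of $d$ not covered cleanly by this estimate are checked directly.

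Next I would bound $|V(G_i)|$. By part~(c), each iteration multiplies the number of vertices by at most $\mu$ times the current maximum degree, where $\mu = \tfrac{11}{5}$. Since the successive maximum degrees are at most $d$, then $2^{1/2} d^{1/2}$, then $2^{1/2+1/4} d^{1/4}$, and in general at most $2^{1/2 + \cdots + 1/2^{i}} d^{1/2^{i}}$ after $i$ steps, multiplying these bounds over $k$ steps yields
$$
|V(H)| \le \mu^k \cdot 2^{\,1/2 + 1/4 + \cdots + 1/2^{k-1}} \cdot d^{\,1 + 1/2 + \cdots + 1/2^{k}} \cdot n \le 2\,\mu^k \cdot d^2 \cdot n,
$$
because $1/2 + \cdots + 1/2^{k-1} < 1$ and $1 + 1/2 + \cdots + 1/2^{k} < 2$. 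Finally, from $k \le \log\log\tfrac{d}{2} + 2$ we get $\mu^k \le \mu^2 \bigl(\log\tfrac{d}{2}\bigr)^{\log \mu}$, and since $\log\tfrac{11}{5} < 1.14$ this is $O(\log^{1.14} d)$, giving $|V(H)| = O(d^2 n \log^{1.14} d)$ as claimed.

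The only delicate point is the degree bookkeeping: one must verify that the recursion $\Delta \mapsto \bigl\lceil \sqrt{2\Delta}\,\bigr\rceil$ really does descend into the regime $\Delta \le 4$ — where Theorem~\ref{thm:decrease degree}(b) switches to the constant bound $3$ — within $\log\log\tfrac{d}{2} + 2$ steps, together with the corresponding base cases for small $d$. Once the number of iterations $k$ is pinned down, the size estimate is just the sum of two geometric series plus the elementary bound on $\mu^k$, and the cop-number inequality is immediate from iterating part~(a).
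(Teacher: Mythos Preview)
Your proposal is correct and follows essentially the same approach as the paper: iterate Theorem~\ref{thm:decrease degree}, use the doubly-exponential bracketing $2^{2^{k-2}+1} < d \le 2^{2^{k-1}+1}$ to bound the number of iterations by $k \le \log\log\tfrac{d}{2}+2$, and multiply the per-step blow-up factors to get $|V(H)| \le 2\mu^k d^2 n = O(d^2 n \log^{1.14} d)$. The only addition you make over the paper's exposition is spelling out explicitly that $\mu^k \le \mu^2(\log\tfrac{d}{2})^{\log\mu}$ with $\log\tfrac{11}{5} < 1.14$, which is exactly the step the paper leaves implicit.
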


Let $\omega:\mathbb{N}\rightarrow \mathbb{N}$ be a function that tends to infinity, say $\omega(n)=5\log{n}$,
and let
$G\in \mathcal{G}_{n,p}$ for $p=\frac{\omega(n)}{n}$. It has been proved in \cite{random, zigzag} that as long as $pn > 2.1 \log n$, we have asymptotically almost surely (a.a.s.)
$$
c(G)\geq \frac{1}{(pn)^2}n^{\frac{1}{2}-\frac{9}{2\log\log{pn}}} = \frac{1}{(\omega(n))^2}n^{\frac{1}{2} - \frac{9}{2\log \log \omega(n)}} = \Omega(n^{1/2-\varepsilon}).
$$

In this graph the maximum degree is a.a.s. at most $2pn=2\omega(n)$. Applying the above approach we will get $G'$, a graph on $O(n\,\omega^2(n)\log^{1.14}{\omega(n)})$ vertices with $c(G')\geq \Omega(n^{1/2-\varepsilon})$. Therefore, by a change of variable it is easy to check that if $G'$ is a (subcubic) graph on $n$ vertices then $c(G')\geq \Omega(n^{1/2-\varepsilon})$.

\begin{theorem}\label{thm:degree}
For every value of $\varepsilon>0$ and large enough $n$, there are subcubic graphs on $n$ vertices with cop number $\Omega(n^{1/2-\varepsilon})$.
\end{theorem}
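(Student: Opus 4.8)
The plan is to combine the known cop-number lower bounds for sparse random graphs with the degree-reduction machinery of Corollary~\ref{cor:degree}, and then pass to the subcubic setting by a change of variables. Fix $\varepsilon>0$. First I would take $\omega(n)=5\log n$ and a random graph $G\in\mathcal{G}_{n,p}$ with $p=\omega(n)/n$, so that $pn=5\log n>2.1\log n$ and the bound of \cite{random, zigzag} applies: a.a.s.
$$
c(G)\ \ge\ \frac{1}{(\omega(n))^2}\, n^{\frac12-\frac{9}{2\log\log\omega(n)}}.
$$
Since $(\omega(n))^2=25\log^2 n=n^{o(1)}$ and $\frac{9}{2\log\log\omega(n)}\to 0$, this gives $c(G)\ge n^{\frac12-o(1)}\ge n^{\frac12-\varepsilon/2}$ for all large $n$. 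At the same time, a routine Chernoff/first-moment estimate shows that a.a.s.\ $\Delta(G)\le 2pn=2\omega(n)=10\log n$. So I may fix one concrete graph $G$ on $n$ vertices with $\Delta(G)\le 10\log n$ and $c(G)\ge n^{1/2-\varepsilon/2}$.

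Next I would apply Corollary~\ref{cor:degree} with $d=10\log n$ to obtain a subcubic graph $H$ with $c(H)\ge c(G)\ge n^{1/2-\varepsilon/2}$ and
$$
|V(H)|\ =\ O\bigl(d^2 n\log^{1.14}d\bigr)\ =\ O\bigl(n\,\log^2 n\,\log^{1.14}(\log n)\bigr)\ =\ n^{1+o(1)}.
$$
Writing $N=|V(H)|$, from $N=n^{1+o(1)}$ we get $\log N=(1+o(1))\log n$, i.e.\ $n=N^{1-o(1)}$, and hence
$$
c(H)\ \ge\ n^{\frac12-\frac{\varepsilon}{2}}\ =\ N^{(1-o(1))(\frac12-\frac{\varepsilon}{2})}\ \ge\ N^{\frac12-\varepsilon}
$$
for all sufficiently large $N$. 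Thus, as $n$ ranges over the integers, the resulting values $N=N(n)$ form an infinite set that is \emph{dense} in the sense that $N(n+1)/N(n)\to 1$, and for each such $N$ there is a subcubic graph on $N$ vertices with cop number at least $N^{1/2-\varepsilon}$.

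Finally, to realize \emph{exactly} $n$ vertices for every large $n$, I would use a standard padding argument. Given $n$, pick the largest reachable $N\le n$; density of the set of reachable values gives $N\ge n^{1-o(1)}$, so the corresponding subcubic graph $H$ satisfies $c(H)\ge N^{1/2-\varepsilon}\ge n^{1/2-2\varepsilon}$. Now enlarge $H$ to $n$ vertices without increasing its maximum degree or decreasing its cop number: first subdivide each edge of $H$ once (creating degree-$2$ vertices and, as noted in the proof of Theorem~\ref{thm:decrease degree}(a), not decreasing the cop number), and then attach a pendant path of the appropriate length at one such degree-$2$ vertex (a retraction, so again the cop number does not drop), which adjusts the order to any prescribed value above $N+|E(H)|=n^{1-o(1)}$. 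Renaming $2\varepsilon$ as $\varepsilon$ finishes the proof. I do not expect a genuine obstacle here; the only point that needs care is the bookkeeping in the change of variables — checking that the $\operatorname{polylog}(n)$ blow-up in $|V(H)|$ costs only $o(1)$ in the exponent — which it plainly does.
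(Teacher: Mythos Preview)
Your proposal is correct and follows essentially the same route as the paper: take a sparse random graph $G\in\mathcal{G}_{n,p}$ with $p=5\log n/n$, invoke the lower bound of \cite{random,zigzag} together with the a.a.s.\ degree bound $\Delta(G)\le 2pn$, apply Corollary~\ref{cor:degree}, and then absorb the $\operatorname{polylog}$ blow-up into the exponent via a change of variables. The paper's own argument stops at ``by a change of variable it is easy to check''; your explicit bookkeeping and padding step (to hit a prescribed order exactly) go beyond what the paper writes, and are fine --- though note that subdividing \emph{every} edge before attaching the path may overshoot $n$, so you would instead just take the disjoint union with a path (or subdivide a single edge and attach there), which is an easy fix.
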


\section{Digraphs of bounded degree}
\label{sec:5.3}

Recently, the Meyniel's Conjecture on digraphs has received a lot of attention \cite{high-girth,Jeremie,hosseini_game_2018,grid}. In this section we will use the same technique as above to find Eulerian digraphs of bounded degree and with arbitrarily large cop number. The gadget that we are going to get is different but will have the same properties.

\begin{theorem}\label{thm:digraph}
For every graph $D$, there is a graph $D'$ with the following properties:
\begin{itemize}
\item[\rm (a)]
$c(D') \ge c(D)$,
\item[\rm (b)]
$\Delta^+(D')= \Delta^-(D') = 2$
\item[\rm (c)]
$|V(D')| \le 4(\Delta^- + \Delta^+)|V(D)|$.
\end{itemize}
\end{theorem}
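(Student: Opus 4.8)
The plan is to imitate the proof of Theorem~\ref{thm:decrease degree}, using a directed replacement gadget that keeps the in-arcs and the out-arcs of a vertex apart. For a vertex $v$ of the digraph $D$ with $p=d^-(v)$ and $q=d^+(v)$ (we may assume $\Delta^-(D),\Delta^+(D)\ge 2$, as otherwise $D$ already has the required degrees) I would replace $v$ by a gadget $B_v$ consisting of an \emph{in-tree} $T^-_v$ and an \emph{out-tree} $T^+_v$ joined by a single arc $a^-_v\to a^+_v$ between their roots. Here $T^-_v$ is a balanced binary tree whose $p$ leaves receive the external arcs entering $v$, all arcs oriented towards $a^-_v$, with a directed path prepended above the binary part so that every in-leaf lies at distance exactly $H:=\lceil\log\Delta^-(D)\rceil$ from $a^-_v$, the same value for \emph{every} gadget; symmetrically $T^+_v$ is a directed path issuing from $a^+_v$ followed by a balanced binary tree whose $q$ leaves emit the external arcs leaving $v$, all arcs oriented away from $a^+_v$, with every out-leaf at distance exactly $H':=\lceil\log\Delta^+(D)\rceil$ from $a^+_v$. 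Each arc $vu$ of $D$ becomes the external arc from the out-leaf of $B_v$ assigned to $u$ to the in-leaf of $B_u$ assigned to $v$. Part~(b) is then immediate: inside a gadget only the vertices with two children in a binary part reach internal in- or out-degree $2$, while every other vertex has internal in- and out-degree at most $1$ together with at most one external arc, so $\Delta^-(D')=\Delta^+(D')=2$. Part~(c) is a routine count: $|V(B_v)|\le 2p+2q+H+H'+O(1)$, and since $\sum_v(p+q)=2|A(D)|\le(\Delta^-+\Delta^+)|V(D)|$ while $H+H'=\lceil\log\Delta^-\rceil+\lceil\log\Delta^+\rceil\le\Delta^-+\Delta^+$, adding these gives $|V(D')|\le 4(\Delta^-+\Delta^+)|V(D)|$.

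For part~(a) I would run the shadow argument of Theorem~\ref{thm:decrease degree}. Put $t=c(D)-1$ and fix an escape strategy $\sigma$ for the robber against $t$ cops in $D$. Any token in $D'$ always lies inside one gadget, and its \emph{shadow} is that gadget's vertex of $D$. The robber plays in rounds of fixed length $R:=H+H'+2$, maintaining the invariant that at the start of a round he sits at the vertex $a^+_v$ of his current gadget (shadow $v$) and the cop shadows are exactly the cop positions in a play on $D$ in which it is the robber's turn. He consults $\sigma$; if it moves him along $vu$, he spends the round descending $T^+_v$ to the out-leaf assigned to $u$ ($H'$ moves), crossing the external arc into $B_u$ ($1$ move), climbing $T^-_u$ to $a^-_u$ ($H$ moves) and crossing the apex arc to $a^+_u$ ($1$ move); if $\sigma$ tells him to wait, he stays at $a^+_v$ for the whole round. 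Either way the round has length exactly $R$ and restores the invariant with the new shadow.

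The crux, exactly as in Theorem~\ref{thm:decrease degree}, is that during one round each cop's shadow advances along at most one arc of $D$. Between two consecutive times a cop crosses an external arc it must travel from an in-leaf to an out-leaf of some gadget, and because the only directed route from the in-side of a gadget to its out-side passes through the apex arc, such a passage takes at least $H+H'+1$ moves; hence two crossings are at least $R$ moves apart and at most one occurs per round. And the robber is never captured: at the start of a round no cop shares $B_v$ with him, a cop that enters $B_v$ later in the round does so through an in-leaf and then climbs $T^-_v$, which is arc-disjoint from the out-tree he is descending, and no cop is ever inside $B_u$ during the round since $\sigma$ chose $u$ so that no cop's next position is $u$. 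Thus the robber evades $t$ cops forever in $D'$, so $c(D')\ge c(D)$.

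The hard part is this last verification, and it is what forces the design. A well-defined uniform round length $R$ is available only because every in-leaf-to-out-leaf distance in every gadget equals $H+H'+1$, which is why the binary parts must be balanced and padded to the common depths $H$ and $H'$ and why no extra arc inside a gadget may shorten an in-side-to-out-side route. (If one additionally wants $D'$ to be Eulerian whenever $D$ is, the internal funnel vertices can be rebalanced by return arcs, but these must be routed along sufficiently long detours so as not to shorten any such distance, at the cost of a logarithmic factor in $|V(D')|$.) Once this uniformity is secured, the ``at most one crossing per round'' estimate and the collision check above complete the proof of~(a).
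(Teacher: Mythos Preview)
Your approach is essentially the same as the paper's: replace each vertex by an in-directed binary tree glued to an out-directed binary tree so that every in-leaf--to--out-leaf distance is a fixed global constant, and then run the shadow argument in rounds of that length. The only cosmetic differences are that the paper uses one fixed gadget sized for $\Delta^-$ and $\Delta^+$ at every vertex (merging the two roots into a single vertex and simply discarding the unused leaves), whereas you size each gadget to the local degrees $d^-(v),d^+(v)$ and pad with a directed path to equalize depths; both achieve the same uniform traversal length, and your more explicit treatment of part~(a) spells out exactly what the paper abbreviates to ``by a similar lemma.'' One small point to tighten: in the case where $\sigma$ tells the robber to stay at $v$, your ``arc-disjoint trees'' justification does not apply (the robber sits at $a^+_v$, which a cop climbing $T^-_v$ could reach), so you should invoke your point about $u$ with $u=v$ there instead---since $\sigma$ keeps $v$ out of the cops' next shadow positions, no cop enters $B_v$ at all during that round.
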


\begin{proof} Let the maximum out-degree of the digraph be $\Delta^+=o$ and the maximum in-degree $\Delta^-=i$.  Consider $x^-_1, \ldots, x^-_i$ and  $x^+_1, \ldots, x^+_o$. Now find the smallest $k$ such that $2^k\geq i$ and make a complete binary tree where $x^-_j$ are the leaves of the tree and direct all edges towards the root.
Also find the smallest $l$ such that $2^l\geq o$ and make a complete binary tree whose leaves are $x^+_j$ and direct all edges away from the root. Now merge the roots of these directed trees.
Note that this gadget has less than $2^{k+1}+2^{l+1}\leq 4(o+i)$ vertices and the distance from any $x^-_s$ to any $x^+_e$ is fixed and equal to $k+l$ (for $1\leq s\leq i$ and $1\leq e\leq o$).

\begin{figure}[h]
    \centering
    \includegraphics[width=0.45\textwidth]{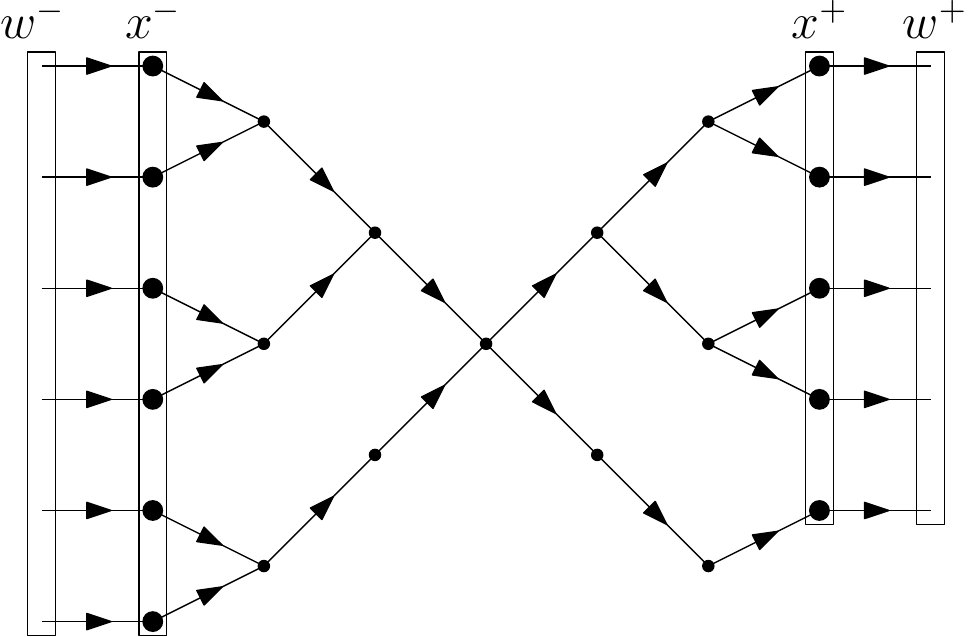}
    \caption{The gadget when $\Delta^-=6$ and $\Delta^+=5$ (after deleting extra vertices).}
    \label{digraph-bounded-degree}
\end{figure}

Now for any vertex $v$ where $w^-_1, \ldots, w^-_{d^-(v)}$ and $w^+_1, \ldots, w^+_{d^+(v)}$ are in and out-neighbors of $v$, connect (with a directed edge) $w^-_j$ to $x^-_j$ ($j=1,\ldots, d^-(v)$) and similarly connect $x^+_j$ to $w^+_j$ ($j=1,\ldots,d^+(v)$). See Figure \ref{digraph-bounded-degree}. Note that we can delete the unnecessary vertices.

If we replace all vertices of a digraph $D$ with this gadget to get $D'$, it is easy to see that in and out-degree of vertices of $D'$ is bounded by 2, number of vertices of $D'$ is at most $4(\Delta^- + \Delta^+)$ times the number of vertices of $D$ and by a similar lemma as Lemma \ref{cor:degree} we have $c(D')\geq c(D)$.
\end{proof}

\section{Connection to Meyniel's Conjecture}

We say that a family of graphs \emph{satisfies the Weak Meyniel's Conjecture with exponent $\delta$} if $c(G)\le n^{\delta+o(1)}$ for every graph $G$ in the family, where $n=|V(G)|$ and the asymptotics are with respect to $n$.

\begin{lemma}
Meyniel's Conjecture for subcubic graphs implies the Weak Meyniel's Conjecture with exponent $\tfrac{3}{4}$ for the general case.
\end{lemma}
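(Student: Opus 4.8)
The plan is to marry the degree-reduction of Corollary \ref{cor:degree} with a domination argument that disposes of the high-degree vertices. Assume Meyniel's conjecture for subcubic graphs, so that $c(H)\le a\sqrt{|V(H)|}$ for some constant $a$ and every subcubic graph $H$. Let $G$ be a connected graph on $n$ vertices, set $T=\lceil n^{1/4}\rceil$, let $B$ be the set of vertices of degree at least $T$, and let $A=V(G)\setminus B$; the value $T=n^{1/4}$ is forced by the budget-balancing explained at the end.

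First I would dominate $B$ cheaply. Greedily repeat: pick a vertex whose closed neighbourhood meets the currently uncovered set $U\subseteq B$ in as many vertices as possible. Since every $u\in U\subseteq B$ satisfies $|N_G[u]|\ge T$, a double-counting argument produces a vertex meeting at least a $T/n$ fraction of $U$, so after $t$ rounds the uncovered set has at most $n(1-T/n)^t<1$ vertices once $t>(n/T)\ln n$. This gives a set $D\subseteq V(G)$ with $|D|=O((n/T)\log n)=O(n^{3/4}\log n)$ and $B\subseteq N_G[D]$. Station one cop permanently on each vertex of $D$. These cops confine the robber: the moment he occupies a vertex of $B\cup N_G[D]$ the appropriate stationary cop captures him on the cops' next turn (using $B\subseteq N_G[D]$). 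Hence, after his opening move, the robber is trapped inside $R:=V(G)\setminus N_G[D]=A\setminus N_G[D]$, and in fact inside a single connected component $C$ of $G[R]$: a walk joining two components of $G[R]$ in $G$ must leave $R$, so attempting to switch components gets him caught. Observe that $\Delta(G[C])<T\le n^{1/4}$ and $|V(C)|\le n$.

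Next I would catch the robber inside $C$. Applying Corollary \ref{cor:degree} to $C$ yields a subcubic graph $\widehat C$ with $c(\widehat C)\ge c(C)$ and $|V(\widehat C)|=O\bigl((n^{1/4})^2\cdot n\cdot\log^{1.14}n\bigr)=O(n^{3/2}\log^{1.14}n)$, whence the assumed subcubic bound gives $c(C)\le a\sqrt{|V(\widehat C)|}=O(n^{3/4}\log^{0.57}n)$. I would place this many additional cops anywhere at the start; once the robber has revealed the component $C$, I walk these cops through $G$ into $C$ (possible since $G$ is connected, and cops are never in danger) and then run a winning strategy for $C$. This is legitimate because the cop number is independent of the cops' initial positions, and the robber — being unable to leave $C$ — cannot profit from the rounds the cops spend travelling; moreover the bound $O(n^{3/4}\log^{0.57}n)$ is uniform over every component of $G[R]$, so the same number of cops works whichever component the robber picked. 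Altogether $c(G)\le|D|+O(n^{3/4}\log^{0.57}n)=O(n^{3/4}\log n)=n^{3/4+o(1)}$, which is the Weak Meyniel's Conjecture with exponent $\tfrac34$.

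The crux is simply the choice of threshold that balances the two cop budgets: dominating $B$ costs $\Theta\bigl((n/T)\log n\bigr)$ cops while, after reduction, the low-degree remnant costs $\Theta\bigl(T\sqrt n\,\mathrm{polylog}(n)\bigr)$ cops, and these agree exactly when $T^2=\sqrt n$, i.e.\ $T=n^{1/4}$, which pins the exponent at $\tfrac34$. The only delicate point is that $G[R]$ can be disconnected: committing cops to all components in advance might cost $\Omega(n)$ of them, so the second batch of cops must be held in reserve until the robber commits to a component and only then be marched in — which is valid precisely because by that time the robber is permanently confined to that component.
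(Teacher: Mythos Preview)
Your proof is correct and follows the paper's approach: threshold the degrees at $n^{1/4}$, spend $O(n^{3/4+o(1)})$ stationary cops to confine the robber to a component of maximum degree below $n^{1/4}$, and then apply Corollary~\ref{cor:degree} together with the assumed subcubic Meyniel bound to that component. The only cosmetic difference is in the first step: you dominate the high-degree set $B$ with a greedy set-cover argument (incurring a harmless extra $\log n$), whereas the paper simply places a cop on any vertex of current degree $\ge n^{1/4}$ and deletes its closed neighbourhood, removing at least $n^{1/4}$ vertices per cop and hence using at most $n^{3/4}$ cops directly.
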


\begin{proof}
Assume that Meyniel's conjecture is true for subcubic graphs and let $G$ be a general graph on $n$ vertices. 
For a fixed $0<\varepsilon<1$, if $G$ has a vertex of degree at least $n^\varepsilon$, then we will put a cop on it to cover the vertex and its neighbourhood. Remove this vertex and its neighbourhood from $G$ and repeat it until there is not vertex of degree at least $n^\varepsilon$ in $G$. So by using at most $O(n^{1-\varepsilon})$ cops we can reduce the game to a graph of maximum degree at most $n^\varepsilon$. Note that one of the components of this new graph contains all the vertices that the robber visit without being captured. From this graph we can get $G'$, a subcubic graph on $n'=O(n^{1+2\varepsilon}\log^{1.14}{n})$ vertices. By Meyniel's conjecture (for subcubic graphs)
$c(G')\leq O(\sqrt{n'})$.
Therefore we have
$c(G)\leq O(n^{1-\varepsilon}) + O(n^{1/2+\varepsilon} \log^{0.57}{n}).$

Considering $\varepsilon=1/4$, we get
$c(G)\leq O(n^{3/4+o(1)})$.
\end{proof}

Pra\l{}at and Wormald \cite{random2, random3} proved that Meyniel's Conjecture holds for almost all cubic graphs which strongly supports that the weak conjecture holds with exponent 3/4.

The same proof yields the following more general relationship.

\begin{corollary}
Weak Meyniel's conjecture for subcubic graphs with exponent $1-\varepsilon$ implies the Weak Meyniel's conjecture with exponent $1 - \tfrac{1}{2}\varepsilon$ for the general case.
\end{corollary}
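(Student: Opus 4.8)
The plan is to mimic the proof of the preceding Lemma verbatim, simply tracking the exponent symbolically rather than substituting $\varepsilon = 1/4$ at the end. So suppose the Weak Meyniel's Conjecture holds for subcubic graphs with exponent $1-\varepsilon$; that is, every subcubic graph $H$ on $n'$ vertices has $c(H) \le n'^{\,1-\varepsilon+o(1)}$. Let $G$ be an arbitrary graph on $n$ vertices, and fix a threshold parameter $\alpha \in (0,1)$ to be chosen at the end (in the Lemma this was $n^{1/4}$; here we keep it as $n^\alpha$).

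First I would run the same greedy ``high-degree vertex removal'' step: as long as $G$ contains a vertex $v$ of degree at least $n^{\alpha}$, place a cop on $v$ (permanently guarding $v$ and all of $N(v)$) and delete $v\cup N(v)$ from the graph. Each such step removes at least $n^{\alpha}+1$ vertices, so this uses at most $O(n^{1-\alpha})$ cops, and it leaves a graph $G_0$ with $\Delta(G_0) < n^{\alpha}$ in which the remaining cops must finish the job; as in the Lemma, the robber-reachable part of the game lives inside one component of $G_0$, so it suffices to catch the robber there. Next I would apply Corollary \ref{cor:degree} to $G_0$: since $\Delta(G_0) < n^{\alpha} =: d$, we obtain a subcubic graph $H$ with $c(H) \ge c(G_0)$ on $n' = O(d^2 n \log^{1.14} d) = O(n^{1+2\alpha}\log^{1.14} n)$ vertices.

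Then I would invoke the hypothesis on $H$: $c(H) \le n'^{\,1-\varepsilon+o(1)} = \bigl(n^{1+2\alpha}\bigr)^{1-\varepsilon+o(1)} = n^{(1+2\alpha)(1-\varepsilon) + o(1)}$, the polylog factors being absorbed into the $o(1)$ in the exponent. Combining the two stages,
\[
  c(G) \;\le\; O(n^{1-\alpha}) + n^{(1+2\alpha)(1-\varepsilon)+o(1)}.
\]
It remains to choose $\alpha$ to balance the two exponents, i.e.\ to solve $1-\alpha = (1+2\alpha)(1-\varepsilon)$. This gives $1-\alpha = 1 - \varepsilon + 2\alpha - 2\alpha\varepsilon$, hence $\varepsilon = 3\alpha - 2\alpha\varepsilon$, so $\alpha = \varepsilon/(3 - 2\varepsilon)$; plugging back, the common exponent is $1-\alpha = 1 - \varepsilon/(3-2\varepsilon) = (3-3\varepsilon)/(3-2\varepsilon)$, which is strictly less than $1$ and, for small $\varepsilon$, equals $1 - \tfrac{1}{3}\varepsilon + O(\varepsilon^2)$.

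I should flag a discrepancy with the stated conclusion: the corollary as printed claims exponent $1-\tfrac12\varepsilon$, whereas the computation above — and also a sanity check against the Lemma, where $1-\varepsilon$ subcubic with $\varepsilon = 1/2$ (i.e.\ the true Meyniel bound) is claimed to give $3/4 = 1 - \tfrac12\cdot\tfrac12$ — suggests the honest balance point. Re-examining the Lemma's own proof: there $\alpha$ was taken to be $1/4$, the reduced graph has $n' = O(n^{1+2\cdot 1/4}) = O(n^{3/2})$ vertices, and $\sqrt{n'} = n^{3/4}$, matching $O(n^{1-\alpha}) = O(n^{3/4})$ — so with $\varepsilon=1/2$ the balance is $1-\alpha = (1+2\alpha)\cdot\tfrac12$, giving $\alpha = 1/4$ and exponent $3/4$, consistent with $1 - \tfrac12\varepsilon$ \emph{at that point} but only because $3/4$ happens to equal both $1 - \varepsilon/(3-2\varepsilon)|_{\varepsilon=1/2}$ and $1 - \tfrac12\varepsilon|_{\varepsilon=1/2}$. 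The main obstacle, then, is not mathematical difficulty but getting the arithmetic statement right: I would present the clean parametrized version $c(G) \le n^{1-\varepsilon/(3-2\varepsilon)+o(1)}$, note that this beats the trivial bound for every $\varepsilon>0$, and remark that for small $\varepsilon$ one may take the weaker, cleaner-looking exponent. If the paper insists on the form $1-\tfrac12\varepsilon$, one simply checks $\varepsilon/(3-2\varepsilon) \ge \tfrac12\varepsilon$ fails in general, so I would instead phrase the corollary with the exact exponent or with $1 - \tfrac13\varepsilon$, which is a valid (if not tight) clean bound since $\varepsilon/(3-2\varepsilon) \ge \varepsilon/3$.
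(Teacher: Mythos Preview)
Your approach is exactly what the paper intends: it offers no separate argument for the Corollary, only the sentence ``The same proof yields the following more general relationship,'' so carrying the Lemma's proof through with a symbolic threshold $\alpha$ is precisely the paper's proof. Your computation of the balanced exponent
\[
1-\alpha \;=\; (1+2\alpha)(1-\varepsilon)\quad\Longrightarrow\quad \alpha=\frac{\varepsilon}{3-2\varepsilon},\qquad 1-\alpha=\frac{3-3\varepsilon}{3-2\varepsilon}
\]
is correct.

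The discrepancy you flag is real, not a mistake on your part. One checks that $1-\varepsilon/(3-2\varepsilon)=1-\varepsilon/2$ if and only if $\varepsilon=1/2$, which is exactly the case treated in the Lemma; for every $\varepsilon\in(0,1/2)$ one has $1-\varepsilon/(3-2\varepsilon)>1-\varepsilon/2$, so the exponent $1-\tfrac12\varepsilon$ printed in the Corollary is \emph{stronger} than what ``the same proof'' actually delivers. (Equivalently, there is no choice of $\alpha$ making both $1-\alpha\le 1-\varepsilon/2$ and $(1+2\alpha)(1-\varepsilon)\le 1-\varepsilon/2$ hold when $\varepsilon<1/2$.) Your suggested fixes --- state the exact exponent $1-\varepsilon/(3-2\varepsilon)$, or the clean but valid weakening $1-\tfrac13\varepsilon$ --- are the right way to phrase the conclusion; the paper's $1-\tfrac12\varepsilon$ appears to be an arithmetic slip that happens to be correct only at the single value $\varepsilon=1/2$ used in the preceding Lemma.
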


\bibliographystyle{plain}
\bibliography{references}

\end{document}